\documentclass[12pt]{amsart}
\usepackage{enumerate}
\usepackage{hyperref}
\usepackage{graphicx,color}
\usepackage{cite}
\usepackage{cancel}
\usepackage{amsthm, amsmath, amssymb, mathtools}
\usepackage[top=45truemm, bottom=45truemm, left=30truemm, right=30truemm]{geometry}

\renewcommand{\epsilon}{\varepsilon}

\newcommand{\abs}[1]{\left|#1\right|}


\numberwithin{equation}{section}

\newtheorem{theorem}{Theorem}[section]
\newtheorem{lemma}[theorem]{Lemma}

\theoremstyle{definition}

\newcommand{\cX}{\mathcal{X}}

\title[A simple proof of the formula of Solov'ev--Nielsen--Blom ...]{A simple proof of the formula of Solov'ev--Nielsen--Blom for the expected waiting time}

\author[Y.\ Yoshida]{Yuuya Yoshida}
\address{Yuuya Yoshida\\
Nagoya Institute of Technology\\ Gokiso-cho\\ Showa-ku\\ Nagoya\\ 466-8555\\ Japan}
\curraddr{}
\email{yyoshida9130@gmail.com}

\subjclass[2020]{60E05 68R15.} 

\keywords{expected waiting time, random data, combinatorics on words.}

\begin{document}
\maketitle

\begin{abstract}
Solov'ev (1966), Nielsen (1973), and Blom (1982) independently showed a formula for the expected waiting time until a given finite pattern first occurs in random data.
In this paper, we give a simple and combinatorial proof of the formula.
\end{abstract}

\section{Introduction}\label{intro}

Let $r\ge2$ be an integer, $\cX$ be the alphabet $\{0,1,\ldots,r-1\}$, and 
$p$ be a probability distribution on $\cX$ such that all $p(x)$ are positive.
We represent a finite pattern as a finite word over $\cX$.
Unless otherwise noted, finite words are considered over $\cX$.

Given a finite word $w$, 
we consider the expected waiting time $E(w)$ until $w$ first occurs in random data $\mathbf{X}=X_1X_2\cdots$\footnote{Since we interpret $X_1X_2\cdots$ as the product of words, it is not the product of integers.}, 
where $X_1,X_2,\ldots$ are independent and identically distributed random variables subject to $p$.
For the empty word $\emptyset$, we regard $E(\emptyset)$ as zero; 
see Section~\ref{EWT} for a more precise definition of $E(w)$.
Using the following theorem, one can compute $E(w)$ recursively.

\begin{theorem}[Solov'ev \cite{Solov'ev}, Nielsen \cite{Nielsen}, and Blom \cite{Blom}]\label{thm1} 
	Let $w$ be a finite word of length $\ge1$, and $\hat{w}$ be the longest bifix of $w$.
	Then 
	\[
	E(w) = E(\hat{w}) + p(w)^{-1},
	\]
	where $p(w)$, $|w|$, and $w(i)$ denote the product $\prod_{i=1}^{|w|} p(w(i))$, the length of $w$, and the $i$th letter of $w$, respectively.
\end{theorem}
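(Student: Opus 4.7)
My plan is to deduce the recurrence from the stronger closed-form identity
\[
E(w) \;=\; \sum_{u} p(u)^{-1},
\]
where $u$ ranges over all nonempty words that are simultaneously a prefix and a suffix of $w$ (so $u=w$ is permitted). Once this identity is established, the recurrence is immediate: isolating the term $u=w$ gives $p(w)^{-1}$, while the remaining summands range over the nonempty proper bifixes of $w$. These coincide with the nonempty prefix-and-suffixes of $\hat w$, because every nonempty proper bifix $u$ of $w$ satisfies $|u|\le|\hat w|$, hence $u$ is both the length-$|u|$ prefix of $\hat w$ and the length-$|u|$ suffix of $\hat w$; conversely, every nonempty prefix-and-suffix of $\hat w$ is a proper bifix of $w$. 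Applying the identity to $\hat w$ therefore yields $E(w)=p(w)^{-1}+E(\hat w)$.

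For the identity itself I plan to use the classical gambling argument. At each time $k\ge1$, just before $X_k$ is revealed, a fresh gambler arrives with one dollar and bets it all on the event $X_k=w(1)$: on success her fortune multiplies by $p(w(1))^{-1}$, on failure she goes bankrupt. In the former case she reinvests her full fortune in a fair bet that $X_{k+1}=w(2)$, and continues in this manner until she either completes $w$ or loses everything. Setting $M_n=(\text{total wealth of all gamblers after step }n)-n$ produces a mean-zero martingale, since every new deposit contributes $+1$ on both sides and each individual bet is fair. Optional stopping at the first-occurrence time $T$ of $w$, which has finite expectation by the crude geometric bound $P(T>n\cdot|w|)\le(1-p(w))^n$, gives that $E(w)$ equals the expected total wealth of the gamblers at time $T$.

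The combinatorial heart of the proof is the identification of the solvent gamblers at time $T$. Gambler $k$ is solvent precisely when $X_k X_{k+1}\cdots X_T$ equals the length-$(T-k+1)$ prefix of $w$. But this string is also the length-$(T-k+1)$ suffix of $X_1\cdots X_T$, and since $w$ itself is the length-$|w|$ suffix of $X_1\cdots X_T$ and $T-k+1\le|w|$, the matched prefix of $w$ must simultaneously be a suffix of $w$. Conversely, every nonempty prefix-and-suffix $u$ of $w$ corresponds to the unique solvent gambler at position $T-|u|+1$, whose fortune at time $T$ equals $p(u)^{-1}$. Summing over the solvent gamblers yields the identity.

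I expect the main obstacle to be technical rather than conceptual: one must verify the hypotheses of optional stopping and keep careful track of who has arrived, who has bet, and who remains solvent at each step. The mathematical content of the proof reduces essentially to the bijection ``solvent gambler at time $T$ $\leftrightarrow$ nonempty prefix-and-suffix of $w$.''
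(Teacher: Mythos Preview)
Your argument is correct: the gambling martingale yields the closed form $E(w)=\sum_{u}p(u)^{-1}$ over nonempty borders $u$ of $w$, and the recurrence then follows from the elementary fact that the proper nonempty borders of $w$ coincide with the nonempty borders of $\hat w$. The technical points you flag (bounded increments via the uniform bound $W_n\le\sum_{j=1}^{|w|}p(w[1,j])^{-1}$, finiteness of $E[T]$) are routine.

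This is, however, a genuinely different route from the paper's. The paper deliberately avoids martingales and optional stopping; its point is to give a \emph{combinatorial} proof. It proceeds by induction on $|w|$, using two ingredients: a one-step conditioning identity (Lemma~2.3) that expresses $p(x)E(w_x)$ in terms of $E(w[1,n-1])$ and the quantities $E(\hat w_y)$ for $y\ne x$, and a purely word-combinatorial lemma (Lemma~3.1) locating the letter $x$ for which $\hat w_x$ is longest and showing that, for every other $y$, $\hat w_y$ is already the longest bifix of the shorter word $w[1,n_x-1]y$. Subtracting two instances of Lemma~2.3 then collapses the induction. What your approach buys is the explicit closed form and a conceptually transparent picture, at the cost of invoking optional stopping; what the paper's approach buys is a self-contained elementary argument that never leaves conditional expectations and combinatorics on words, and whose auxiliary identity (Lemma~2.3) also drives the secondary formulas F1 and F2.
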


Here, a word $v\not=w$ is called a \textit{bifix} of a finite word $w$ 
if $w=vw_1=w_2v$ for some $w_1$ and some $w_2$.
For example, the empty word $\emptyset$ and the word $0$ are bifixes of the binary word $00110$.
Theorem~\ref{thm1} was first proved by Solov'ev \cite{Solov'ev}, and was rediscovered by Nielsen \cite{Nielsen} and Blom \cite{Blom}.

In this paper, we give a simple proof of Theorem~\ref{thm1}.
Although $E(w)$ is an object in probability theory, 
our proof is combinatorial rather than probabilistic, and is new to the best of our knowledge.
Furthermore, we see the following formulas as secondary results.
\begin{description}
	\item[F1]
	$\displaystyle \sum_{y\in\cX} p(y)E(wy) = E(w)+1+(r-1)p(w)^{-1}$ for every finite word $w$.\footnote{We define $p(\emptyset)=1$ for the empty word $\emptyset$.}
	\item[F2]
	$\displaystyle \sum_{|w|=n} p(w)E(w) = r^n+n-1$ for every integer $n\ge0$.
\end{description}
They are not strictly related to the proof of Theorem~\ref{thm1}, but 
we prove them in Section~\ref{formulas} because we do not know whether they are widely known.

\section{Expected waiting time}\label{EWT}

In this section, we define the waiting time $T_w$, 
the expected waiting time $E(w)$, and 
the conditional expected waiting time $E(w|w')$.
In particular, $E(w)$ is defined more precisely than the definition stated in Section~\ref{intro}.

First, we state several notations on words.
Denote by $w(i)$ (resp.\ $|w|$) the $i$th letter (resp.\ the length) of a word $w$.
For a word $w$ and integers $1\le i<j\le|w|$, 
denote by $w[i,j]$ the word $w(i)w(i+1)\cdots w(j)$.
If $w$ is a right-infinite word, denote by $w[i,\infty)$ the word $w(i)w(i+1)\cdots$ for an integer $i\ge1$.

\vskip1ex
\textbf{Definition of $T_w$.}
For $\xi=\xi(1)\xi(2)\cdots\in\cX^{\mathbb{N}}$, define $T_w(\xi)$ as 
$T_w(\xi)=\infty$ if $w$ never occurs in $\xi$, and 
\[
T_w(\xi) = \min\biggl\{ |w'w| : \xi=w'w\xi',\ w'\in\bigcup_{k=0}^\infty \cX^k,\ \xi'\in\cX^{\mathbb{N}} \biggr\}
\]
if $w$ occurs in $\xi$, where $\cX^0$ denotes the singleton $\{ \emptyset \}$.
That is, $T_w(\xi)$ is the time when $w$ first occurs in $\xi=\xi(1)\xi(2)\cdots$ 
if time $i$ denotes the $i$th position from the left.
Since every finite word $w$ occurs in random data $\mathbf{X}$ with probability $1$ (see Lemma~\ref{lem0} in Appendix), 
the value $T_w(\mathbf{X})$ is finite with probability $1$.

\vskip1ex
\textbf{Definition of $E(w)$.}
For a finite word $w$, define $E(w)$ as the expectation of $T_w(\mathbf{X})$.
It can easily be checked that $E(w)$ is always finite.
Indeed, it is clear that $E(\emptyset)=0$; 
if $w$ is a finite word of length $\ge1$, then  
\begin{align*}
	E(w) &= \sum_{q=1}^\infty \sum_{\abs{w}q\le k<\abs{w}(q+1)} k\mathbf{P}\{ T_w(\mathbf{X})=k \}
	\le \sum_{q=1}^\infty \sum_{\abs{w}q\le k<\abs{w}(q+1)} kp(w)(1-p(w))^{q-1}\\
	&< \sum_{q=1}^\infty \abs{w}^2(q+1)p(w)(1-p(w))^{q-1}
	< \infty,
\end{align*}
where $\mathbf{P}\{ T_w(\mathbf{X})=k \}$ is the probability that $T_w(\mathbf{X})=k$.

\vskip1ex
\textbf{Definition of $E(w|w')$.}
For finite words $w$ and $w'$, define $E(w|w')$ as the expectation of $T_w(w'\mathbf{X}) - |w'|$.
The value $E(w|w')$ can be regarded as the expected additional time until $w$ first occurs after $w'$ occurs.

\vskip1ex
Now, we state three lemmas.
The first two can be understood intuitively and easily, 
and can be proved along the definitions of $E(w)$ and $E(w|w')$.
The last one follows from the first two easily.

\begin{lemma}\label{lem1}
	If a word $w$ is the product $w_1w_2$ of finite words $w_1$ and $w_2$, then 
	\[
	E(w) = E(w|w_1) + E(w_1).
	\]
\end{lemma}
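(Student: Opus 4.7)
My plan is to decompose the waiting time $T_w(\mathbf{X})$ into the time until $w_1$ first appears plus the additional time thereafter, and then exploit the i.i.d.\ nature of $(X_i)$ (a strong Markov / renewal argument for an i.i.d.\ sequence). Set $\tau := T_{w_1}(\mathbf{X})$, which is finite almost surely, and let $\mathbf{Y} := X_{\tau+1}X_{\tau+2}\cdots$ denote the tail after $\tau$.

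First I would establish the pointwise identity
\[
T_w(\mathbf{X}) = \tau + T_w(w_1\mathbf{Y}) - |w_1|
\]
on the event $\{\tau<\infty\}$. The justification uses that $w_1$ is a prefix of $w$: every occurrence of $w$ in $\mathbf{X}$ ending at some position $m$ forces $w_1$ to occur ending at position $m-|w_2|$, whence $m-|w_2|\ge\tau$ by minimality of $\tau$. Consequently all occurrences of $w$ in $\mathbf{X}$ lie in the suffix starting at position $\tau-|w_1|+1$, and that suffix is exactly $w_1\mathbf{Y}$; the display only re-expresses the first-occurrence end-position in the shifted coordinate.

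Next I would take expectations. Since $\tau$ is a stopping time for the i.i.d.\ sequence $(X_i)$, the tail $\mathbf{Y}$ is independent of $(X_1,\ldots,X_\tau)$ and has the same law as $\mathbf{X}$; in particular $E[T_w(w_1\mathbf{Y})]=E[T_w(w_1\mathbf{X})]=E(w|w_1)+|w_1|$. Averaging the displayed identity then gives
\[
E(w) = E[\tau] + E[T_w(w_1\mathbf{Y})] - |w_1| = E(w_1) + E(w|w_1),
\]
as desired. The main obstacle is the pointwise identity: one must rule out any occurrence of $w$ strictly before position $\tau$, which is exactly where the hypothesis $w=w_1w_2$ (i.e.\ $w_1$ being a prefix of $w$) is used.
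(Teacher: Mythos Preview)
Your argument is correct and is essentially the paper's own proof. The paper carries out the same decomposition by conditioning on each event $\{T_{w_1}(\mathbf{X})=k\}$ and summing over $k$, using the identity $T_w(\xi)=T_w(w_1\xi')+k-|w_1|$ with $\xi'=\xi[k+1,\infty)$ and the independence of $\xi'$ from $\{T_{w_1}=k\}$; your stopping-time/strong-Markov phrasing with $\tau=T_{w_1}(\mathbf X)$ and $\mathbf Y=\mathbf X[\tau+1,\infty)$ is just the same computation packaged more abstractly, and your justification of the pointwise identity (via the prefix property of $w_1$) is in fact more explicit than the paper's.
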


\begin{lemma}\label{lem2}
	For every finite word $w$ of length $n\ge1$, 
	\[
	E(w) = E(w[1,n-1]) + 1 + \sum_{y\not=w(n)} p(y)E(w|w[1,n-1]y),
	\]
	where $w[1,n-1]$ is interpreted as the empty word $\emptyset$ if $n=1$.
\end{lemma}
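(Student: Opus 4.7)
The plan is to decompose using Lemma~\ref{lem1} and then condition on a single letter. Set $u=w[1,n-1]$ and $a=w(n)$, so that $w=ua$. By Lemma~\ref{lem1} with $w_1=u$ and $w_2=a$, we have $E(w)=E(w|u)+E(u)$, so it suffices to prove
\[
E(w|u) = 1 + \sum_{y\neq a} p(y)\,E(w|uy).
\]

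Starting from the definition of $E(w|u)$ as the expectation of $T_w(u\mathbf{X})-|u|$, I would condition on the value of $X_1$ and split into two cases. If $X_1=a$, then the first $n$ letters of $u\mathbf{X}$ form $u\cdot a=w$; since $|w|=n>n-1=|u|$, no substring of $u\mathbf{X}$ of length $n$ can lie entirely inside $u$, so the first occurrence of $w$ ends exactly at position $n=|u|+1$ and therefore $T_w(u\mathbf{X})-|u|=1$. If $X_1=y$ with $y\neq a$, I would write $u\mathbf{X}=uy\mathbf{X}'$, where $\mathbf{X}'=X_2X_3\cdots$ has the same distribution as $\mathbf{X}$ and is independent of $X_1$; then $T_w(u\mathbf{X})-|u|=\bigl(T_w(uy\mathbf{X}')-|uy|\bigr)+1$, whose expectation equals $E(w|uy)+1$ by the definition of $E(w|uy)$. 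Multiplying each case by $p(y)$, summing over $y\in\cX$, and using $p(a)+\sum_{y\neq a}p(y)=1$ yields the displayed identity.

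The argument is essentially routine once this decomposition is set up, so I do not anticipate a serious obstacle. The only point that merits care is the justification, in the case $X_1=a$, that no occurrence of $w$ in $u\mathbf{X}$ can end before position $n$; this reduces to the length comparison $|w|>|u|$ and is immediate. Combining the identity above with the equality $E(w)=E(w|u)+E(u)$ from Lemma~\ref{lem1} gives the formula in the statement.
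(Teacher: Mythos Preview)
Your argument is correct and essentially identical to the paper's own proof: both apply Lemma~\ref{lem1} to split off $E(w[1,n-1])$ and then condition on the first random letter $X_1=\xi(1)$ after the prefix $u=w[1,n-1]$. The only cosmetic difference is that the paper keeps the term $y=w(n)$ in the sum and then observes $E(w\mid w)=0$, whereas you handle that case directly by noting $T_w(u\mathbf{X})-|u|=1$.
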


\begin{lemma}\label{lem3}
	Let $w$ be a finite word of length $n\ge1$.
	For $x\in\cX$, let $\hat{w}_x$ be the longest bifix of $w_x=w[1,n-1]x$, 
	where $w[1,n-1]$ is interpreted as the empty word $\emptyset$ if $n=1$.
	Then, for every $x\in\cX$, 
	\[
	p(x)E(w_x) = E(w[1,n-1]) + 1 - \sum_{y\not=x} p(y)E(\hat{w}_y).
	\]
\end{lemma}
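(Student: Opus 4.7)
The plan is to apply Lemma~\ref{lem2} to the word $w_x$ of length $n$ (whose $(n-1)$-prefix is $w[1,n-1]$ and whose $n$th letter is $x$), obtaining
\[
E(w_x) = E(w[1,n-1]) + 1 + \sum_{y\not=x} p(y)\, E(w_x|w[1,n-1]y).
\]
I then plan to rewrite each conditional expectation $E(w_x|w[1,n-1]y)$ as $E(w_x) - E(\hat{w}_y)$ and solve the resulting linear equation for $E(w_x)$. Since the coefficient of $E(w_x)$ on the right becomes $\sum_{y\not=x} p(y) = 1 - p(x)$, moving it to the left yields $p(x)E(w_x)$ on the left as required.

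The key step is the identification $E(w_x|w[1,n-1]y) = E(w_x|\hat{w}_y)$, after which Lemma~\ref{lem1} applies: $\hat{w}_y$, being a proper prefix of $w[1,n-1]y$, has length $\le n-1$ and thus coincides with a prefix of $w_x = w[1,n-1]x$, so Lemma~\ref{lem1} gives $E(w_x|\hat{w}_y) = E(w_x) - E(\hat{w}_y)$. To justify the identification I would use a short KMP-style observation. Since $y\not=x$, the pattern $w_x$ does not occur inside $w[1,n-1]y$, so its first occurrence in $w[1,n-1]y\mathbf{X}$ starts at some position $k \ge 2$. The overlapping prefix of $w_x$ of length $n-k+1$ is then simultaneously a suffix and a proper prefix of $w[1,n-1]y$ (the latter because its length is $\le n-1$, so it is determined by the common $(n-1)$-prefix $w[1,n-1]$), hence a bifix of $w[1,n-1]y$; maximality of $\hat{w}_y$ therefore forces $k \ge n - |\hat{w}_y| + 1$. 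Writing $w[1,n-1]y = v\hat{w}_y$, this confirms that the first occurrence lies in the tail $\hat{w}_y\mathbf{X}$ shifted by $|v|$, so $T_{w_x}(w[1,n-1]y\mathbf{X}) - n = T_{w_x}(\hat{w}_y\mathbf{X}) - |\hat{w}_y|$ almost surely, and the equality of expectations follows.

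I expect the only subtle step to be this equality of conditional expectations; once accepted, Lemma~\ref{lem3} follows by direct algebraic manipulation of the equation produced by Lemma~\ref{lem2}. The excerpt's remark that Lemma~\ref{lem3} follows easily from Lemmas~\ref{lem1} and~\ref{lem2} suggests the author regards the identification as essentially immediate from the definitions of $\hat{w}_y$ and $E(w_x|\cdot)$. Edge cases such as $n = 1$ (where $w[1,n-1] = \emptyset$ and $\hat{w}_y = \emptyset$ for every $y$) and $\hat{w}_y = \emptyset$ (where Lemma~\ref{lem1} degenerates to $E(w_x|\emptyset) = E(w_x)$) are automatically absorbed into the same manipulation.
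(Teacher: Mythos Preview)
Your proposal is correct and follows essentially the same route as the paper: apply Lemma~\ref{lem2} to $w_x$, replace $E(w_x\mid w[1,n-1]y)$ by $E(w_x\mid \hat w_y)$, use Lemma~\ref{lem1} to write this as $E(w_x)-E(\hat w_y)$, and solve the resulting linear equation. The only difference is that you spell out the KMP-style justification of $E(w_x\mid w_y)=E(w_x\mid \hat w_y)$, whereas the paper simply asserts it; your explanation is accurate and matches what the paper takes for granted.
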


For the proofs of Lemmas~\ref{lem1} and \ref{lem2}, see Appendix.
We prove only Lemma~\ref{lem3} in this section.

\begin{proof}[Proof of Lemma~$\ref{lem3}$ assuming Lemmas~$\ref{lem1}$ and $\ref{lem2}$]
	Let $x\in\cX$.
	By Lemmas~\ref{lem1}, 
	\[
	E(w_x) = E(w[1,n-1]) + 1 + \sum_{y\not=x} p(y)E(w_x|w_y).
	\]
	Since $E(w_x|w_y)=E(w_x|\hat{w}_y)$ for every $y\in\cX\setminus\{ x \}$, we have 
	\[
	E(w_x) = E(w[1,n-1]) + 1 + \sum_{y\not=x} p(y)E(w_x|\hat{w}_y).
	\]
	By this and Lemma~\ref{lem2}, 
	\[
	E(w_x) = E(w[1,n-1]) + 1 + (1-p(x))E(w_x) - \sum_{y\not=x} p(y)E(\hat{w}_y),
	\]
	which yields the desired equality.
\end{proof}

\section{Proof of Theorem~$\ref{thm1}$}

We prove Theorem~\ref{thm1} in this section.
The following lemma is the key to our proof.

\begin{lemma}\label{lem4}
	Let $w$ be a finite word of length $n\ge2$.
	For $y\in\cX$, let $\hat{w}_y$ be the longest bifix of $w_y=w[1,n-1]y$, and $n_y$ be the length of $\hat{w}_y$.
	Assume 
	\[
	n_x = \max_{y\in\cX} n_y.
	\]
	Then (i) $n_x\ge1$;
	(ii) $w[1,n_x-1]$ is the longest bifix of $w[1,n-1]$;
	(iii) $\hat{w}_y$ is the longest bifix of $w[1,n_x-1]y$ for every $y\in\cX\setminus\{x\}$.
\end{lemma}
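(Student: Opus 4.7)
The plan is to prove parts (i), (ii), and (iii) in order, because (iii) uses part (ii). The key observation kept throughout is that, for each $y$, the bifix $\hat{w}_y$ (of length $n_y \le n-1$) is simultaneously the prefix $w[1,n_y]$ of $w_y$ (since $w$ and $w_y$ agree on the first $n-1$ letters) and a suffix of $w_y$, so its last letter is $y$; equivalently, $w(n_y) = y$ whenever $n_y \ge 1$. Part (i) is then immediate: take $y = w(1)$, so that the one-letter word $w(1)$ is a proper bifix of $w_y$ (it coincides with both the first and the last letter of $w_y$); hence $n_{w(1)} \ge 1$, and $n_x \ge 1$ by maximality.

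For (ii), I would first check that $w[1,n_x-1]$ is a bifix of $w[1,n-1]$ by writing $\hat{w}_x = w[1,n_x] = w_x[n-n_x+1,n]$ and stripping off the last letter (which is $x$ on both sides). To prove it is the \emph{longest} such bifix, I argue by contradiction: if $v$ is a bifix of $w[1,n-1]$ with $|v| \ge n_x$, then $|v| \le n-2$, so setting $y = w(|v|+1)$, the word $w[1,|v|+1] = v \cdot y$ is a prefix of $w_y$ trivially and a suffix because $v$ is a suffix of $w[1,n-1]$. Hence $v \cdot y$ is a bifix of $w_y$ of length $|v|+1 > n_x \ge n_y$, contradicting the maximality of $n_x$.

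For (iii), fix $y \ne x$. I start with the strict inequality $n_y < n_x$: if $n_y \ge 1$ then $w(n_y) = y \ne x = w(n_x)$ forces $n_y \ne n_x$. Hence $\hat{w}_y = w[1,n_y]$ is a prefix of $w[1,n_x-1]y$, and for the suffix property I observe that the same prefix-equals-suffix argument applied to $\hat{w}_y$ shows $w[1,n_y-1]$ is a bifix of $w[1,n-1]$; since all bifixes of a common word are nested as prefixes, part (ii) then places $w[1,n_y-1]$ as a suffix of $w[1,n_x-1]$, which delivers the suffix property of $\hat{w}_y$ in $w[1,n_x-1]y$. For maximality, I argue by contradiction: any bifix $v$ of $w[1,n_x-1]y$ with $|v| > n_y$ must end in $y$, so $v = w[1,|v|-1] \cdot y$, and I promote it to a bifix of $w_y$ of the same length by using the identity $w(i) = w(n-n_x+i)$ from part (ii) to translate the relation $w[1,|v|-1] = w[n_x-|v|+1,\,n_x-1]$ into $w[1,|v|-1] = w[n-|v|+1,\,n-1]$, contradicting the maximality of $n_y$.

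I expect this final reindexing in (iii) to be the main obstacle: algebraically it is routine, but conceptually it is where part (ii) does its real work by supplying the shift $i \mapsto n - n_x + i$ that lifts a bifix of the shorter word $w[1,n_x-1]y$ to a bifix of the longer word $w_y$.
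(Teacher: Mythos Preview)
Your proof is correct and follows essentially the same strategy as the paper's: part~(i) via $y=w(1)$, part~(ii) by extending a longest bifix of $w[1,n-1]$ by one letter to get a bifix of some $w_{x'}$, and part~(iii) via the shift identity $w[1,n_x-1]=w[n-n_x+1,n-1]$ coming from~(ii). The only difference is presentational: for~(iii) the paper simply notes that $w[1,n_x-1]y=w[n-n_x+1,n-1]y$ is the length-$n_x$ suffix of $w_y$, so the bifixes of length $\le n_x-1$ of the two words coincide and the conclusion is immediate, whereas you unpack this bijection into separate prefix, suffix, and maximality checks.
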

\begin{proof}
	Proof of (i).
	It follows from $n\ge2$ that $n_x\ge n_{w(1)}\ge1$.
	\par
	Proof of (ii).
	Let $m$ be the length of the longest bifix of $w[1,n-1]$.
	Since $w[1,n_x-1]$ is a bifix of $w[1,n-1]$, the inequality $m\ge n_x-1$ holds.
	We show the opposite inequality.
	Since $w[1,m]$ is a bifix of $w[1,n-1]$, the word $w'=w[1,m+1]$ is a bifix of $w_{x'}$, where $x'=w(m+1)$.
	Thus, $m+1\le n_{x'}\le n_x$.
	Therefore, $m=n_x-1$, which implies (ii).
	\par
	Proof of (iii).
	By (i) and the definition of $n_y$, if $y\not=x$, then $n_y<n_x$.
	Thus, $\hat{w}_y$ is the longest bifix of $w[1,n_x-1]y=w[n-n_x+1,n-1]y$.
\end{proof}

Let us prove Theorem~\ref{thm1} by using Lemmas~\ref{lem3} and \ref{lem4}.

\begin{proof}[Proof of Theorem~$\ref{thm1}$]
	We show the assertion by induction on $|w|$.
	For $|w|=1$, the assertion follows from $E(\emptyset)=0$ and 
	$E(x) = \sum_{k=1}^\infty kp(x)(1-p(x))^{k-1} = p(x)^{-1}$.
	Fix $n\ge2$. Assume that the assertion holds for $|w|=n-1$.
	We show the assertion also holds for $|w|=n$.
	\par
	By the induction hypothesis and Lemma~\ref{lem4}, 
	\begin{equation}
		E(w[1,n-1]) = E(w[1,n_x-1]) + p(w[1,n-1])^{-1},
		\label{eq1}
	\end{equation}
	where $x$, $n_y$, $w_y$ and $\hat{w}_y$ are the same as in Lemma~\ref{lem4} for $y\in\cX$.
	By Lemma~\ref{lem3}, 
	\begin{equation}
		p(x)E(w_x) = E(w[1,n-1]) + 1 - \sum_{y\not=x} p(y)E(\hat{w}_y).
		\label{eq2}
	\end{equation}
	By Lemma~\ref{lem4}, $\hat{w}_y$ is the longest bifix of $w[1,n_x-1]y$ for every $y\in\cX\setminus\{x\}$.
	This and Lemma~\ref{lem3} imply that 
	\begin{equation}
		p(x)E(\hat{w}_x) = E(w[1,n_x-1]) + 1 - \sum_{y\not=x} p(y)E(\hat{w}_y).
		\label{eq3}
	\end{equation}
	By \eqref{eq2}, \eqref{eq3} and \eqref{eq1}, 
	\begin{align*}
		p(x)E(w_x) - p(x)E(\hat{w}_x)
		&= E(w[1,n-1]) - E(w[1,n_x-1])\\
		&= p(w[1,n-1])^{-1} = p(w_x)^{-1}p(x).
	\end{align*}
	Thus, 
	\begin{equation}
		E(w_x) - E(\hat{w}_x) = p(w_x)^{-1}.
		\label{eq4}
	\end{equation}
	By Lemma~\ref{lem3}, for every $z\in\cX$, 
	\begin{equation}
		p(z)E(w_z)-p(z)E(\hat{w}_z) = E(w[1,n-1])+1-\sum_{y\in\cX} p(y)E(\hat{w}_y).
		\label{eq1''}
	\end{equation}
	Substituting $z=w(n),x$ into \eqref{eq1''}, we obtain 
	\[
	p(w(n))E(w)-p(w(n))E(\hat{w}) = p(x)E(w_x)-p(x)E(\hat{w}_x).
	\]
	By this and \eqref{eq4}, the assertion also holds for $|w|=n$.
	Therefore, we complete the proof.
\end{proof}

\section{Proofs of \upshape{F1} and \upshape{F2}}\label{formulas}

Using Lemma~\ref{lem3} and Theorem~\ref{thm1}, we prove F1 and F2 in this section.

\begin{proof}[Proof of \upshape{F1}]
	Let $w$ be a finite word.
	By Lemma~\ref{lem3}, for every $x\in\cX$, 
	\begin{equation}
		p(x)E(wx) = E(w)+1 - \sum_{y\in\cX} p(y)E(\widehat{wy})+p(x)E(\widehat{wx}),
		\label{eq6}
	\end{equation}
	where $\widehat{wy}$ denotes the longest bifix of $wy$ for $y\in\cX$.
	Thus, 
	\begin{equation}
	\begin{split}
		\sum_{x\in\cX} p(x)E(wx)
		&= rE(w)+r-r\sum_{y\in\cX} p(y)E(\widehat{wy})+\sum_{x\in\cX} p(x)E(\widehat{wx})\\
		&= r(E(w)+1) - (r-1)\sum_{y\in\cX} p(y)E(\widehat{wy}).
	\end{split}\label{eq7}
	\end{equation}
	By \eqref{eq6} and Theorem~\ref{thm1}, 
	\begin{equation}
	\begin{split}
		\sum_{y\in\cX} p(y)E(\widehat{wy})
		&= E(w)+1+p(x)E(\widehat{wx})-p(x)E(wx)\\
		&= E(w)+1-p(w)^{-1}.
	\end{split}\label{eq5}
	\end{equation}
	Substituting \eqref{eq5} into \eqref{eq7}, we obtain F1.
\end{proof}

\begin{proof}[Proof of \upshape{F2}]
	For an integer $n\ge0$, set $S_n=\sum_{|w|=n} p(w)E(w)$.
	It is clear that $S_0=0$.
	By F1, for every integer $n\ge0$, 
	\begin{align*}
		S_{n+1} &= \sum_{|w|=n} p(w)\sum_{y\in\cX} p(y)E(wy)\\
		&= \sum_{|w|=n} p(w)\bigl( E(w)+1+(r-1)p(w)^{-1} \bigr)\\
		&= S_n+1+(r-1)r^n.
	\end{align*}
	Solving this recurrence relation with $S_0=0$, 
	we obtain $S_n=r^n+n-1$.
\end{proof}

\section*{Acknowledgments}
The author was supported by JSPS KAKENHI Grant Numbers JP22J00339 and JP22KJ1621.

\appendix
\section{Proofs of Lemmas~$\ref{lem1}$ and $\ref{lem2}$}

In this appendix, we prove Lemmas~\ref{lem1} and \ref{lem2}.
Denote by $\mathbf{P}$ the probability distribution of $\mathbf{X}$.
First, let us begin with the following lemma.

\begin{lemma}\label{lem0}
	Every finite word $w$ occurs in random data $\mathbf{X}$ with probability $1$.
\end{lemma}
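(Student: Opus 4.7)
The plan is to partition $\mathbf{X}$ into non-overlapping blocks of length $n=|w|$ and to reduce the statement to a standard independent-trials computation. If $w=\emptyset$ the conclusion is immediate, so I assume throughout that $n\ge 1$.

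First I would introduce, for each integer $k\ge 1$, the block $B_k := X_{(k-1)n+1}X_{(k-1)n+2}\cdots X_{kn}$ and the event $A_k := \{B_k = w\}$. Since the $X_i$ are i.i.d.\ under $p$ and distinct $A_k$'s depend on disjoint index sets, the events $A_1, A_2,\ldots$ are mutually independent, each with probability $p(w)=\prod_{i=1}^n p(w(i))$, which is strictly positive by the hypothesis that every $p(x)$ is positive.

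Next I would observe that, by independence and $0<p(w)\le 1$,
\[
\mathbf{P}\Bigl\{\bigcap_{k=1}^\infty A_k^{\,c}\Bigr\} = \lim_{N\to\infty}(1-p(w))^N = 0,
\]
so $\mathbf{P}\bigl\{\bigcup_{k\ge 1} A_k\bigr\} = 1$. On the event $\bigcup_{k\ge 1} A_k$, some block $B_k$ equals $w$, and therefore $w$ occurs in $\mathbf{X}$ at position $(k-1)n+1$. This will yield $\mathbf{P}\{T_w(\mathbf{X})<\infty\}=1$ as claimed.

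I do not foresee any genuine obstacle: the entire argument is a one-line application of independence once the blocking is in place. The only point worth emphasising is that restricting attention to block-aligned occurrences undercounts the occurrences of $w$ in $\mathbf{X}$, but this lower bound already suffices for the almost-sure conclusion; and the positivity assumption on $p$ is used exactly once, to ensure $p(w)>0$.
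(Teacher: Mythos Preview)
Your proof is correct and follows essentially the same approach as the paper: both partition $\mathbf{X}$ into disjoint blocks of length $|w|$, use independence to compute $\mathbf{P}\bigl\{\bigcap_k A_k^{\,c}\bigr\}=\prod_k(1-p(w))=0$, and conclude via the inclusion $\{w\text{ never occurs}\}\subset\bigcap_k A_k^{\,c}$. The only cosmetic difference is that you handle the case $w=\emptyset$ explicitly and present the complementary formulation $\mathbf{P}\bigl\{\bigcup_k A_k\bigr\}=1$.
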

\begin{proof}
	The inclusion relation 
	\[
	\{\text{$w$ never occurs in $\mathbf{X}$}\} \subset \bigcap_{k=0}^\infty \{ \mathbf{X}[k\abs{w}+1,\ldots,(k+1)\abs{w}]\not=w \}
	\]
	holds. This and the independence of $X_1,X_2,\ldots$ imply that 
	\[
	\mathbf{P}\{\text{$w$ never occurs in $\mathbf{X}$}\}
	\le \prod_{k=0}^\infty (1-p(w)) = 0,
	\]
\end{proof}

Lemma~\ref{lem0} implies that 
\begin{equation}
	\sum_{k=0}^\infty \mathbf{P}\{ T_w(\mathbf{X})=k \}
	= \mathbf{P}\biggl( \bigcup_{k=0}^\infty \{ T_w(\mathbf{X})=k \} \biggr)
	= \mathbf{P}\{\text{$w$ occurs in $\mathbf{X}$}\} = 1
	\label{eqA3}
\end{equation}
for every finite word $w$.
Using this fact, we prove Lemmas~\ref{lem1} and \ref{lem2}.

\begin{proof}[Proof of Lemma~$\ref{lem1}$]
	By \eqref{eqA3}, 
	\begin{equation}
		E(w) = \int T_w(\xi)\mathbf{P}(d\xi)
		= \sum_{k=0}^\infty \int_{T_{w_1}=k} T_w(\xi)\mathbf{P}(d\xi).
		\label{eqA1}
	\end{equation}
	When $T_{w_1}(\xi)=k$, setting $\xi'=\xi[k+1,\infty)$, 
	we have $T_w(\xi)=T_w(w_1\xi')+k-\abs{w_1}$.
	Since $T_{w_1}(\xi)=k$ is independent of $\xi'$, it follows that 
	\begin{equation}
	\begin{split}
		\int_{T_{w_1}=k} T_w(\xi)\mathbf{P}(d\xi)
		&= \int_{T_{w_1}=k} \bigl( T_w(w_1\xi')+k-\abs{w_1} \bigr)\mathbf{P}(d\xi)\\
		&= \mathbf{P}\{ T_{w_1}(\mathbf{X})=k \}\int \bigl( T_w(w_1\xi')+k-\abs{w_1} \bigr)\mathbf{P}(d\xi')\\
		&= \mathbf{P}\{ T_{w_1}(\mathbf{X})=k \}\bigl( k + E(w|w_1) \bigr)
	\end{split}\label{eqA2}
	\end{equation}
	for every integer $k\ge0$.
	By \eqref{eqA1}, \eqref{eqA2}, and \eqref{eqA3}, 
	\[
	E(w) = \sum_{k=0}^\infty \mathbf{P}\{ T_{w_1}(\mathbf{X})=k \}\bigl( k + E(w|w_1) \bigr)
	= E(w_1) + E(w|w_1).
	\]
\end{proof}

\begin{proof}[Proof of Lemma~$\ref{lem2}$]
	By Lemma~\ref{lem1}, 
	\begin{align*}
		E(w) &= E(w[1,n-1]) + E(w|w[1,n-1])\\
		&= E(w[1,n-1]) + \int \bigl( T_w(w[1,n-1]\xi)-(n-1) \bigr)\mathbf{P}(d\xi)\\
		&= E(w[1,n-1]) + 1 + \int \bigl( T_w(w[1,n-1]\xi)-n \bigr)\mathbf{P}(d\xi).
	\end{align*}
	Setting $y=\xi(1)$ and $\xi'=\xi[2,\infty)$, we obtain 
	\begin{align*}
		E(w) &= E(w[1,n-1]) + 1 + \sum_{y\in\cX} p(y)\int \bigl( T_w(w[1,n-1]y\xi')-n \bigr)\mathbf{P}(d\xi')\\
		&= E(w[1,n-1]) + 1 + \sum_{y\in\cX} p(y)E(w|w[1,n-1]y).
	\end{align*}
	Since $E(w|w[1,n-1]y)=0$ for $y=w(n)$, 
	the desired equality follows.
\end{proof}

\end{document}